\documentclass{article}
\usepackage{amsmath,amssymb,amsthm}
\newtheorem{thm}{Theorem}[section] 
\newtheorem{prop}[thm]{Proposition} 
\newtheorem{defi}[thm]{Definition}

\newtheorem{lem}[thm]{Lemma}

\begin{document}

\title{\LARGE The number of relatively $r$-prime $k$-tuple integers}
\author{\Large Wataru Takeda}
\date{\normalsize Department of Mathematics,\\ Kyoto University, \\Kitashirakawa Oiwake-cho, Sakyo-ku, Kyoto 606-8502,
Japan}
\maketitle
\begin{abstract}
For a fixed integer $r\ge1$, we say $k$-tuple integers $(x_1,\ldots,x_k)$ are relatively $r$-prime if there exists no prime $p$ such that all $k$ integers is multiple of $p^r$. Benkoski proved that the number of relatively $r$-prime $k$-tuple integers in $[1,x]^k$ is $x^k/\zeta(rk)+$(Error term) \cite{Be76}. We showed that the exact order of error term is $x^{k-1}$ for $rk\ge3$ and $k\not=1$. 
\end{abstract}

\setcounter{section}{-1}
\section{Introduction}
From 1800's many results about the distribution of special lattice points were shown.  F. Mertens proved that the density of the set of coprime pairs of integers is $1/\zeta(2)$ in 1874 \cite{Me74}. And this result was extended to $k$-tuple integers by D. N. Lehmer \cite{Le00}. On the other hand, Gegenbauer proved that the probability that an integer is $r$-free is $1/\zeta(r)$ in 1885 \cite{Ge85}. As a generalization of these result, S. J. Benkoski proved that the density of the set of relatively $r$-prime $k$-tuple integers is $1/\zeta(rk)$ in 1976 \cite{Be76}. 

In \cite{Ta16}, we computed the number of coprime $k$-tuple integers in $[-x,x]^k$ and the exact order of magnitude of its error term is $x^{k-1}$ for all $k\ge3$. We will generalize this result to relatively $r$-prime $k$-tuple integers. For fixed $r\ge1$ let $(x_1,\ldots,x_k)_r$ be integer $n$ such that $n$ is the greatest common factor of the form $n^r$ for integer $n\ge1$. When $r=1$,  $(x_1,\ldots,x_k)_1$ means the great common divisor of $x_1,\ldots,x_k$ i.e. $\gcd(x_1,\ldots,x_k)$. And let $V_k^r(x)$ denote the number of $k$-tuple integers $(x_1,\ldots,x_k)$ such that $(x_1,\ldots,x_k)_r=1$ and $|x_i|\le x$ for all $i=1,\ldots,k$. When $r=1$, $V_k^1(x)$ means the number of visible lattice points in $[-x,x]^k$ and $k=1$ a half of $V_1^r(x)$ means the number of $r$-free positive integers $\le x$. And we let $E_k^r(x)$ denote the error term, i.e. $E_k^r(x)=V_k^r(x)-(2x)^k/\zeta(rk)$.

In this paper, we compute $V_k^r(x)$ by following the ways of \cite{Ta16}, so we get a generating function of $V_k^r(x)$ for a fixed positive integer $r$ and the exact order of $E_k^r(x)$ is $x^{k-1}$ for $rk\ge3$ and $k\not=1$. 

\section{Benkoski's result}
To consider the exact order of $E_k^r(x)$, we use S. J. Benkoski result. He considered that the number of $r$-prime $k$-tuple integers $(x_1,\ldots,x_k)$ such that $1\le|x_i|\le x$ for all $i=1,\ldots,k$ by using a general Jordan totient function $J_k^r(n)$ (Theorem 3, 4 and 5 in \cite{Be76}). The general Jordan totient function is defined as follows.

\begin{defi}[Definition of Chapter 2. \cite{Be76}] 
Let $r\ge1$, $k\ge1$ are integers. 
\begin{align*}
J_k^r(n):=&|\{(x_1,...,x_k)\in\mathbb{Z}^k~|~(x_1\ldots,x_k,n)_r=1,1\le x_i\le n\ (1\le i\le k)\}|.\\
\intertext{For $k=0$ we define}
J_0^r(n):=&\left\{
\begin{array}{rl}
1&(n \text{ is $r$-free}),\\
0&(\text{otherwise}).
\end{array}
\right.
\end{align*}
\end{defi}
If $r=1$ then $J_k^1(n)$ is ordinaly Jordan totient function and if $r=k=1$ then $J_1^1(n)$ is the Euler totient function. We know that general Jordan totient function $J_k^r(n)$ has Dirichlet product and Euler product expansion,
\[J_k^r(n)=\sum_{d^r|n} \mu(d)\left(\frac n{d^r}\right)^k=n^k\prod_{\substack{p^r|n\\p:\text{prime}}}\left(1-\frac1{p^{rk}}\right).\]
When $n$ is $r$-free, the product is empty and assigned to be the value $1$. This formula is proved as well as an analogue statement of Euler totient functon. 

Benkoski considered only positive integers in his paper \cite{Be76}. But considering the sign of component of $(x_1\ldots,x_k)$, we obtain the following asymptotic formula from Benkoski's result.
\[V_k^r(x)=\frac{2^k}{\zeta(rk)}x^k+\left\{
\begin{array}{ll}
O(x\log x)& (r=1 \text{ and } k=2),\\
O(x^{1/r}) & (r\ge 2 \text{ and } k=1),\\
O(x^{k-1}) & (\text{otherwise}).
\end{array}
\right.\]

\section{The partial sums of the general Jordan totient function}
In this paper, we use the $\Omega$ simbol introduced by G.H. Hardy and J.E. Littlewood. This simbol is defined as follows:
\[f(x)=\Omega(g(x))\overset{def}{\Longleftrightarrow}\limsup_{x\rightarrow\infty}\left|\frac{f(x)}{g(x)}\right|>0.\]
If there exists a function $g(x)$ such that $f(x)=O(g(x))$ and $f(x)=\Omega(g(x))$ then the exact order of $f(x)$ is $g(x)$. 
In \cite{Ta16}, We calculated the exact order of $E_k^1(x)$ by using the following theorem.
\begin{thm}[Lemma 4.2. of \cite{Ta16}] For $r=1$ and $k\ge3$\[\sum_{n \le x}J_{k-1}^1(n)=\frac{x^k}{k\zeta(k)}+\Omega(x^{k-1})\]
\end{thm}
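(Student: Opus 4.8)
The plan is to separate the assertion into an elementary upper-bound computation that pins down the main term $x^k/(k\zeta(k))$ and an $\Omega$-argument that exhibits oscillation at the scale $x^{k-1}$. For the main term I would use the Dirichlet convolution $J_{k-1}^1=\mu*\mathrm{Id}^{k-1}$ recorded in the definition section and interchange the order of summation, writing $n=dm$:
\[\sum_{n\le x}J_{k-1}^1(n)=\sum_{d\le x}\mu(d)\sum_{m\le x/d}m^{k-1}.\]
Inserting the power-sum estimate $\sum_{m\le y}m^{k-1}=y^k/k+O(y^{k-1})$, together with $\sum_{d=1}^{\infty}\mu(d)/d^{k}=1/\zeta(k)$ and the tail bound $\sum_{d>x}d^{-k}=O(x^{1-k})$, yields
\[\sum_{n\le x}J_{k-1}^1(n)=\frac{x^k}{k}\sum_{d\le x}\frac{\mu(d)}{d^k}+O\!\left(x^{k-1}\sum_{d\le x}d^{-(k-1)}\right)=\frac{x^k}{k\zeta(k)}+O(x^{k-1}),\]
where $\sum_{d\le x}d^{-(k-1)}=O(1)$ precisely because $k\ge 3$. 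This confirms that $x^k/(k\zeta(k))$ is the correct main term and that the error is at most $O(x^{k-1})$.

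The substance of the statement is the matching lower bound. Here I would \emph{avoid} analysing the oscillating sum $\sum_{d\le x}\mu(d)d^{-(k-1)}\psi(x/d)$ with $\psi(y)=\{y\}-\tfrac12$, whose nonvanishing $\limsup$ is the genuinely delicate analytic point, and instead exploit the jump structure of the summatory function. Writing $E(x)=\sum_{n\le x}J_{k-1}^1(n)-x^k/(k\zeta(k))$ for the error, observe that the subtracted main term is continuous, so across an integer $N$ the error inherits exactly the jump of the step function:
\[E(N)-\lim_{x\to N^-}E(x)=J_{k-1}^1(N),\]
from which $\max\{|E(N)|,\,|E(N^-)|\}\ge \tfrac12\,J_{k-1}^1(N)$.

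Taking $N=p$ prime gives $J_{k-1}^1(p)=p^{k-1}-1$, so at $x=p$ or just to its left we have $|E(x)|\ge\tfrac12(p^{k-1}-1)$ with $x\sim p$; dividing by $x^{k-1}$ and letting $p\to\infty$ through the primes forces $\limsup_{x\to\infty}|E(x)|/x^{k-1}\ge\tfrac12>0$, that is $E(x)=\Omega(x^{k-1})$. Combined with the $O(x^{k-1})$ bound above, this gives the exact order. The main obstacle I anticipate is essentially bookkeeping: making the interchange of summation and the power-sum remainder uniform enough that the $O(x^{k-1})$ error is clean for every $k\ge 3$, with the borderline case $k=3$ (where $\sum_{d\le x}d^{-2}$ must be treated as a convergent series rather than a growing sum) handled carefully. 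The $\Omega$ half carries no real analytic difficulty once the jump identity is in place, since the size of $J_{k-1}^1(p)$ is completely explicit.
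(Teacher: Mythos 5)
Your proposal is correct. The main-term half is the standard computation (and coincides with what the paper does in Proposition \ref{prop} specialized to $r=1$): write $J_{k-1}^1=\mu*\mathrm{Id}^{k-1}$, interchange summation, use $\sum_{m\le y}m^{k-1}=y^k/k+O(y^{k-1})$ and $\sum_{d\le x}\mu(d)d^{-k}=1/\zeta(k)+O(x^{1-k})$; note only that besides $O(x^{k-1}\sum_{d\le x}d^{-(k-1)})$ you also pick up $\frac{x^k}{k}\cdot O(x^{1-k})=O(x)$ from truncating the M\"obius series, which is harmless for $k\ge3$. Where you genuinely diverge is the $\Omega$ lower bound. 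The paper, following Lemmas 4.1 and 4.2 of \cite{Ta16} (generalized here as Lemma \ref{thm:2.2}), expands the partial sum via the Bernoulli/Faulhaber formula and proves that the oscillating piece $\sum_{d\le x}\mu(d)d^{-k}\{x/d\}$ stays below a fixed negative constant along a carefully chosen arithmetic progression of $x$ --- exactly the delicate point you sidestep. Your jump argument is a legitimate and much shorter substitute: the summatory function jumps by $J_{k-1}^1(p)=p^{k-1}-1$ at each prime $p$ while the subtracted main term is continuous, so $|E(p)|+|E(p^-)|\ge p^{k-1}-1$ and hence $\limsup_{x\to\infty}|E(x)|/x^{k-1}\ge\tfrac12$, which meets the paper's definition of $\Omega$ (a $\limsup$ over real $x$) with an explicit constant; it even generalizes at once to $r\ge2$, $k\ge2$, since $J_{k-1}^r(N)=N^{k-1}$ for every $r$-free $N$. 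What the paper's heavier route buys is term-by-term control of the specific sums $\sum_{d^r\le x}\mu(d)(x/d^r)^{k-j-i}\{x/d^r\}^i$, which is the form in which the oscillation is reused later when the generating-function identity for $V_k^r(x)$ is unwound; if you wanted to feed your result into those later arguments verbatim you would still need something like Lemma \ref{thm:2.2}, but for the theorem as stated your route is a clean and complete shortcut.
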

We follow this way to consider the exact order of $E_k^r(x)$ for $r\ge1$. So we consider an asymptotic formula for the partial sums of $J_{k-1}^r(n)$. Because the general Jordan totient function has Dirichlet product, by applying same argument of the Chapter 4 in \cite{Ta16}, we get following equation:
\[\sum_{n \le x}J_{k-1}^r(n)=\frac 1k \sum_{j=0}^{k-1} \binom kj B_j\sum_{d^r \le x}\mu(d)\left(\frac x{d^r}-\left\{\frac x{d^r}\right\}\right)^{k-j},\]
where $\{x\}$ is the fractional part of $x$ and $B_0,B_1,B_2,\ldots$ are Bernoulli numbers. (Note. We use the second Bernoulli number, i.e. $B_1=\displaystyle{\frac12}$.) To evaluate this sum, we will extend Lemma 4.1. of \cite{Ta16} to $r\ge2$ case.
\begin{lem}
\label{thm:2.2}
Let $\{x\}$ be the fractional part of $x$.
If $rk\ge2$, \[\sum_{d^r\le x}\mu(d)\left(\frac x{d^r}\right)^k\left\{\frac x{d^r}\right\}=\Omega(x^k).\]
\end{lem}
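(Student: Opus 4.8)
The plan is to factor out the size $x^k$ and reduce the claim to a statement about the oscillation of a bounded Dirichlet-type sum. Writing
\[S(x):=\sum_{d^r\le x}\mu(d)\left(\frac x{d^r}\right)^k\left\{\frac x{d^r}\right\}=x^k\,T(x),\qquad T(x):=\sum_{d^r\le x}\frac{\mu(d)}{d^{rk}}\left\{\frac x{d^r}\right\},\]
the assertion $S(x)=\Omega(x^k)$ is equivalent to $\limsup_{x\to\infty}|T(x)|>0$. Since $rk\ge2$ the series $\sum_d d^{-rk}$ converges, so $|T(x)|\le\sum_{d\ge1}d^{-rk}=\zeta(rk)$; in particular $T$ is bounded and $L:=\limsup_{x\to\infty}|T(x)|$ is finite. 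I would then prove $L>0$ by a mean-square (second moment) argument rather than by exhibiting explicit values of $x$, because the latter (e.g. taking $x=N+\tfrac12$, which isolates the $d=1$ term $\tfrac12$ against an error $\le\zeta(rk)-1$) runs into trouble precisely in the borderline cases $rk=2$, where $\zeta(rk)-1>\tfrac12$.

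The heart of the argument is the elementary inequality between the mean and the mean square. On a dyadic window I compare the averages
\[\frac1X\int_X^{2X}T(x)\,dx\quad\text{and}\quad \frac1X\int_X^{2X}T(x)^2\,dx,\]
the point being Cauchy--Schwarz: $\frac1X\int_X^{2X}T^2\ge\bigl(\frac1X\int_X^{2X}T\bigr)^2$. If I can show that the first average tends to the nonzero constant $\tfrac1{2\zeta(rk)}$, then the mean square stays bounded below by $\tfrac1{4\zeta(rk)^2}$; combined with the boundedness of $T$ this forces $L\ge\tfrac1{2\zeta(rk)}>0$, which is exactly the desired conclusion. (Indeed, if $|T(x)|<L+\varepsilon$ for all large $x$, then the mean square is eventually $\le(L+\varepsilon)^2$, while it is also $\ge\tfrac1{4\zeta(rk)^2}+o(1)$, so $L^2\ge\tfrac1{4\zeta(rk)^2}$.)

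Thus everything reduces to evaluating $\frac1X\int_X^{2X}T(x)\,dx\to\frac1{2\zeta(rk)}$, and this is the step I expect to be the main obstacle, because of the moving truncation $d^r\le x$. My plan is to split the range of $d$ at $X^{1/(2r)}$. For $d\le X^{1/(2r)}$ the period $d^r$ is at most $\sqrt X$, so $\frac1X\int_X^{2X}\{x/d^r\}\,dx=\tfrac12+O(d^r/X)$, and summing $\mu(d)d^{-rk}$ against this reproduces $\tfrac12\sum_{d\ge1}\mu(d)d^{-rk}+o(1)=\tfrac1{2\zeta(rk)}+o(1)$. For $X^{1/(2r)}<d\le(2X)^{1/r}$ the coefficients are negligible: since $rk\ge2$ one has $\sum_{d>X^{1/(2r)}}d^{-rk}=O\bigl(X^{(1-rk)/(2r)}\bigr)=o(1)$, and this same bound absorbs the discrepancy between the genuine cutoff $d^r\le x$ and the fixed cutoff $d^r\le X$. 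It is precisely here that the hypothesis $rk\ge2$ enters, and it is the only place where the borderline cases demand that the tail sums be estimated with some care. Once this average is in hand, the Cauchy--Schwarz step together with the boundedness of $T$ completes the proof that $T(x)=\Omega(1)$, hence $S(x)=\Omega(x^k)$.
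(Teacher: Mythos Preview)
Your argument is correct and genuinely different from the paper's. The paper proceeds by exhibiting explicit arithmetic progressions of integers $x$ on which $T(x)=\sum_{d^r\le x}\mu(d)d^{-rk}\{x/d^r\}$ is bounded above by a negative constant: for $rk\ge4$ it takes $x\equiv 2^r-1\pmod{2^r}$ so that the $d=1$ term vanishes and the $d=2$ term dominates the tail $\zeta(rk)-1-2^{-rk}$; the borderline cases $(r,k)=(2,1)$ and $(3,1)$ are then handled by an ad hoc choice $x=m^2\prod_{p\le100}p^r$ together with a numerical computation, and $r=1$ is deferred to the earlier paper. Your averaging approach sidesteps all of this case analysis: once $\tfrac1X\int_X^{2X}T(x)\,dx\to\tfrac1{2\zeta(rk)}$ is established (and your splitting at $d=X^{1/(2r)}$ with the bound $\int_X^{2X}\{x/d^r\}\,dx=\tfrac{X}{2}+O(d^r)$ does this cleanly for every $rk\ge2$), the $\Omega$-result follows uniformly. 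The price is that you do not get an explicit infinite set of $x$'s; the gain is a shorter, case-free proof that also covers $r=1,k=2$ directly.

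One small simplification: the second-moment detour via Cauchy--Schwarz is unnecessary. The first moment already gives $\sup_{x\in[X,2X]}T(x)\ge\tfrac1X\int_X^{2X}T(x)\,dx\to\tfrac1{2\zeta(rk)}$, so $\limsup_{x\to\infty}T(x)\ge\tfrac1{2\zeta(rk)}>0$ immediately.
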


\begin{proof}
It suffices to show that $\displaystyle{\sum_{d^r\le x}\frac {\mu(d)}{d^{rk}}\left\{\frac x{d^{r}}\right\} \le M<0}$ for infinity many values of $x$ and some negative $M$.
The case of $r=1$ is proved in \cite{Ta16}, so it suffices to consider $r\ge2$.
\begin{align*}
\intertext{If $rk \ge 4$, let $x$ be a integer such that $x\equiv2^r-1\!\!\!\mod2^r$ and greater than or equal to $3^r$,}
\sum_{d^r\le x}\frac {\mu(d)}{d^{rk}}\left\{\frac x{d^{r}}\right\}&=\sum_{d\le \sqrt[r] x}\frac {\mu(d)}{d^{rk}}\left\{\frac x{d^{r}}\right\}\\
&=-\frac{2^r-1}{2^{r(k+1)}}+\sum_{3\le d\le \sqrt[r] x}\frac {\mu(d)}{d^{rk}}\left\{\frac x{d^{r}}\right\}.\\
\intertext{Since $\mu(d)=1.0.-1$ and $\displaystyle{\left\{\frac x{d^r}\right\}\le 1}$,}
\sum_{d^r\le x}\frac {\mu(d)}{d^{rk}}\left\{\frac x{d^{r}}\right\}&< -\frac 1{2^{rk}}+\frac1{2^{r(k+1)}}+\sum_{3\le d\le \sqrt[r]x}\frac 1{d^{rk}}\\
&<-\frac 1{2^{rk}}+\frac1{2^{r(k+1)}}+\zeta(rk)-1-\frac 1{2^{rk}}.\\
\intertext{When $rk\ge4$ we know that $\displaystyle{\zeta(rk)-1-\frac1{2^{rk}}<\frac1{2^{rk+1}}}$, so we get}
\sum_{d^r\le x}\frac {\mu(d)}{d^{rk}}\left\{\frac x{d^{r}}\right\}&<-\frac1{2^{rk+1}}+\frac1{2^{r(k+1)}}<0,
\intertext{since $r\ge2$.}
\intertext{So for $rk \ge 4$ the lemma follows.}
\end{align*}
Suppose that $(r,k)=(2,1)$ or $(r,k)=(3,1)$ and $\displaystyle{x=m^2\prod_{p\le 100}p^r}$, where the product is extended over all odd primes less than $100$ and $m$ isn't a multiple of $2$ and $p$.
\begin{align*}
\intertext{Then,}
\sum_{d^r\le x}\frac {\mu(d)}{d^r}\left\{\frac x{d^r}\right\}&= \sum_{d=1}^{100}\frac {\mu(d)}{d^r}\left\{\frac x{d^r}\right\}+\sum_{d=101}^{x^{1/r}}\frac {\mu(d)}{d^r}\left\{\frac x{d^r}\right\}.\\
\intertext{Since $\mu(d)=1.0.-1$ and $\displaystyle{\left\{\frac x{d^r}\right\}< 1}$,}
\sum_{d^r\le x}\frac {\mu(d)}{d^r}\left\{\frac x{d^r}\right\}&< -\frac1{2^{r+1}}+\sum_{d=3}^{100}\frac {\mu(d)}{d^r}\left\{\frac x{d^r}\right\}+\sum_{d=101}^{\infty}\frac 1{d^r}.
\end{align*}
Now we estimate how fast second sum grows.
When $r=2$ we obtain
\begin{align*}
\sum_{d=3}^{100}\frac {\mu(d)}{d^2}\left\{\frac x{d^2}\right\}&=\sum_{p=prime}^{47}\frac 1{(2p)^2}\frac14-\frac14\left(\frac1{30^2}+\frac1{42^2}+\frac1{66^2}+\frac1{78^2}+\frac1{70^2}\right)\\
&<\frac1{50}.
\intertext{On the other hand, when $r=3$,}
\sum_{d=3}^{100}\frac {\mu(d)}{d^3}\left\{\frac x{d^3}\right\}&=\sum_{p=prime}^{47}\frac 1{(2p)^3}\frac {\overline{p}}8-\frac18\left(\frac7{30^3}+\frac5{42^3}+\frac1{66^3}+\frac7{78^3}+\frac3{70^3}\right),\\
\intertext{where $\overline{p}\equiv p\!\!\!\mod8$ and  $0\le \overline{p}<8$. }
\sum_{d=3}^{100}\frac {\mu(d)}{d^3}\left\{\frac x{d^3}\right\}&<\frac25\times\frac1{10^{2}}.
\end{align*}
\begin{align*}
\intertext{From this result and we have $\displaystyle{\sum_{d=101}^{\infty}\frac 1{d^r}\le \frac1{100^{r-1}}}$, we find}
\sum_{d^r\le x}\frac {\mu(d)}{d^r}\left\{\frac x{d^r}\right\}&< -\frac1{2^{r+1}}+\frac25\times\frac1{10^{r-1}}+\frac1{100^{r-1}}\\
&<-\frac1{20},
\intertext{so for $(r,k)=(2,1)$ or $(r,k)=(3,1)$ the lemma follows.}
\intertext{This completes the proof of the lemma.}
\end{align*}
\end{proof}

As we remarked, the partial sums of $J_{k-1}^r(n)$ is equal to \[\frac 1k \sum_{j=0}^{k-1} \binom kj B_j\sum_{d^r \le x}\mu(d)\left(\frac x{d^r}-\left\{\frac x{d^r}\right\}\right)^{k-j}.\] We computed the order of the sum of $\displaystyle{\mu(d)\left(\frac x{d^r}\right)^{k-1}\left\{\frac x{d^r}\right\}}$ for all $rk\ge2$ in Lemma \ref{thm:2.2}. Next we will consider the order of principal term $\displaystyle{\sum_{d^r\le x}\mu(d)\frac{x^k}{d^{rk}}}$ of the partial sums of $J_k^r(n)$ in the Proposition \ref{prop}.
\begin{prop}
\label{prop}
For $rk\ge2$, $\displaystyle{\sum_{d^r\le x}\mu(d)\frac{x^k}{d^{rk}}=\frac{x^k}{\zeta(rk)}+O(x^{1/r})}.$
\end{prop}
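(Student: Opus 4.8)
The plan is to view $\sum_{d^r \le x} \mu(d) x^k / d^{rk}$ as a truncation of the Dirichlet series that represents $1/\zeta(rk)$, and to control the discarded tail by an elementary integral comparison. First I would pull the factor $x^k$ out of the sum, noting that the condition $d^r \le x$ is the same as $d \le x^{1/r}$, so that
\[
\sum_{d^r \le x} \mu(d)\frac{x^k}{d^{rk}} = x^k \sum_{d \le x^{1/r}} \frac{\mu(d)}{d^{rk}}.
\]

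Next I would invoke the classical identity $\sum_{d=1}^{\infty} \mu(d)/d^s = 1/\zeta(s)$, valid for every real $s > 1$. Since $rk \ge 2 > 1$ the series converges absolutely and equals $1/\zeta(rk)$, so the partial sum differs from $1/\zeta(rk)$ exactly by its tail:
\[
\sum_{d \le x^{1/r}} \frac{\mu(d)}{d^{rk}} = \frac{1}{\zeta(rk)} - \sum_{d > x^{1/r}} \frac{\mu(d)}{d^{rk}}.
\]
The only estimate needed is a bound for this tail. Using $|\mu(d)| \le 1$ and comparing the sum with an integral, one gets, for $N = x^{1/r}$,
\[
\left| \sum_{d > N} \frac{\mu(d)}{d^{rk}} \right| \le \sum_{d > N} \frac{1}{d^{rk}} \le \int_{N}^{\infty} \frac{dt}{t^{rk}} = \frac{N^{1-rk}}{rk-1},
\]
where convergence of the integral is guaranteed precisely by $rk > 1$. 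Substituting $N = x^{1/r}$ gives a tail of size $O(x^{(1-rk)/r})$, and multiplying back by $x^k$ yields $x^k \cdot O(x^{(1-rk)/r}) = O(x^{k + (1-rk)/r}) = O(x^{1/r})$, which is the claimed error term.

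There is essentially no hard step here: the whole statement is the assertion that the tail of an absolutely convergent Dirichlet series decays like its first discarded term. The only points requiring a little care are bookkeeping ones --- confirming that the hypothesis $rk \ge 2$ simultaneously ensures that the series sums to $1/\zeta(rk)$ and that the integral comparison is legitimate --- together with checking the exponent arithmetic $k + (1-rk)/r = 1/r$, which holds identically in $r$ and $k$. This last identity is what makes the bound $O(x^{1/r})$ uniform across all admissible pairs, including the boundary case $rk = 2$.
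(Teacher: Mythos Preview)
Your proof is correct and follows essentially the same route as the paper: both rewrite the condition $d^r\le x$ as $d\le x^{1/r}$, pull out $x^k$, and then use that $\sum_{d\le y}\mu(d)/d^s=1/\zeta(s)+O(y^{1-s})$ for $s>1$. The only difference is cosmetic---the paper quotes this estimate from Apostol (Theorem~11.7), whereas you reprove it inline by bounding the tail $\sum_{d>N}1/d^{rk}$ via comparison with $\int^{\infty} t^{-rk}\,dt$; the exponent bookkeeping $k+(1-rk)/r=1/r$ is identical in both.
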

\begin{proof}
We have
\[\sum_{d^r\le x}\mu(d)\frac{x^k}{d^{rk}}=\sum_{d\le x^{1/r}}\mu(d)\frac{x^k}{d^{rk}}.\]
We know that $\displaystyle{\sum_{d\le x}\frac{\mu(d)}{d^s}=\frac1{\zeta(s)}+O(x^{-s+1})}$ for $s>1$. (For the details for the proof of this result, one can see Theorem 11.7 of Apostol's book \cite{Ap76}).
\begin{align*}
\intertext{Use this asymptotic formula,}
\sum_{d^r\le x}\mu(d)\frac{x^k}{d^{rk}}&=x^k\left(\frac1{\zeta(rk)}+O\left((x^{1/r})^{-rk+1}\right)\right),\\
&=\frac{x^k}{\zeta(rk)}+O(x^{1/r}).
\end{align*}
This proposition holds.
\end{proof}
We note that for all $i$
\begin{align*}
\left|\sum_{d^r\le x}\frac {\mu(d)}{d^{rj}}\left\{\frac x{d^r}\right\}^i\right|&\le \sum_{d^r\le x}\frac 1{d^{rj}}=\left\{
\begin{array}{cl}
\zeta(rj)+O(x^{1/r-j})& (rj\ge2),\\
\log x+\gamma+o(1) & (rj=1),
\end{array}
\right.
\end{align*}
where $\gamma$ is Euler's constant, defined by the equation \[\gamma=\lim_{n\rightarrow\infty}\left(\sum_{k=1}^n\frac1k-\log n\right).\]
As we considered above, we get an order of all terms in the partial sums of $J_k^r(n)$. Using this result, we get the following theorem.
\begin{thm}
\label{thm:2.3}
For $rk\ge3$ and $k\not=1$, \[\sum_{n \le x}J_{k-1}^r(n)=\frac{x^k}{k\zeta(rk)}+\Omega(x^{k-1}).\]
\end{thm}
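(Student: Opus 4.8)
The plan is to expand the partial sum of $J_{k-1}^r(n)$ using the multinomial/Bernoulli identity quoted just above the statement, and then isolate exactly one term whose order is $\Omega(x^{k-1})$ while bounding all remaining terms by $O(x^{k-1})$ or smaller, so that the whole sum equals the main term $x^k/(k\zeta(rk))$ plus something of exact order $x^{k-1}$.

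First I would start from
\[
\sum_{n\le x}J_{k-1}^r(n)=\frac1k\sum_{j=0}^{k-1}\binom kj B_j\sum_{d^r\le x}\mu(d)\left(\frac x{d^r}-\left\{\frac x{d^r}\right\}\right)^{k-j}.
\]
I would fix attention on the term $j=0$, which carries the factor $B_0=1$ and an exponent $k$ on the binomial $\bigl(\tfrac x{d^r}-\{\tfrac x{d^r}\}\bigr)$. Binomially expanding that $k$-th power produces, among other pieces, the pure principal term $\mu(d)(x/d^r)^k$ and the cross term $-k\,\mu(d)(x/d^r)^{k-1}\{x/d^r\}$. Summing the principal term over $d^r\le x$ gives, by Proposition \ref{prop}, the value $x^k/\zeta(rk)+O(x^{1/r})$; after the overall factor $1/k$ this is precisely the claimed main term $x^k/(k\zeta(rk))$, with $O(x^{1/r})=O(x^{k-1})$ since $k\ge2$. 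The cross term, again after the factor $-k\cdot\tfrac1k=-1$, is exactly $\sum_{d^r\le x}\mu(d)(x/d^r)^{k-1}\{x/d^r\}$, which is $\Omega(x^{k-1})$ by Lemma \ref{thm:2.2} applied with $k$ replaced by $k-1$ (valid since $r(k-1)\ge1$ when $rk\ge3$, so the hypothesis $rk\ge2$ of the lemma holds in that relabeling).

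The remaining work is purely an upper bound: I must show every other contribution is $O(x^{k-1})$, so that it can only affect the implied constant in the $\Omega$-estimate, not destroy it. These leftover pieces are of two kinds. Within the $j=0$ expansion, the terms $\binom ki (x/d^r)^{k-i}\{x/d^r\}^i$ for $i\ge2$ are controlled by $\sum_{d^r\le x}d^{-r(k-i)}\{x/d^r\}^i\cdot x^{k-i}$; using the displayed bound $\bigl|\sum_{d^r\le x}\mu(d)d^{-rj}\{x/d^r\}^i\bigr|\le\sum d^{-rj}$ stated after Proposition \ref{prop}, each such inner sum is either a convergent constant (when $r(k-i)\ge2$), or at worst grows like $\log x$ (when $r(k-i)=1$), so after multiplying by $x^{k-i}$ with $i\ge2$ the contribution is $O(x^{k-2}\log x)=O(x^{k-1})$. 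For $j\ge1$, the factor $x^{k-j}$ already drops the order by at least one power of $x$, and a fully analogous expansion of $(x/d^r-\{x/d^r\})^{k-j}$ combined with the same inner-sum bounds shows each such term is $O(x^{k-1})$ as well.

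The main obstacle I anticipate is the bookkeeping in the borderline cases where an inner sum has exponent $r\cdot(\text{something})=1$ and therefore contributes a logarithm: I must verify that such logarithmic factors only ever attach to powers $x^{k-i}$ with $i\ge2$ (so that $x^{k-2}\log x$ is genuinely absorbed into $O(x^{k-1})$) and never to the $x^{k-1}$ level, where a stray $\log x$ would be fatal. Concretely this means checking that the only index producing both the exponent $k-1$ on $x$ and a non-convergent inner sum is the single cross term already handled by Lemma \ref{thm:2.2}; every competing source of an $x^{k-1}$ contribution must come with a convergent (bounded) coefficient. Once that case analysis is pinned down, combining the $\Omega(x^{k-1})$ lower bound from the cross term with the $O(x^{k-1})$ upper bound on everything else yields the exact order $x^{k-1}$ and completes the proof.
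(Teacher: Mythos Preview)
Your approach is exactly the paper's: expand via the Bernoulli--binomial identity, extract the main term with Proposition~\ref{prop}, invoke Lemma~\ref{thm:2.2} for the $(j,i)=(0,1)$ cross term, and control the rest with the displayed remark preceding the theorem. The paper's own proof is in fact terser than your outline.

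There is, however, one genuine logical gap in your plan. You assert that once every remaining contribution is shown to be $O(x^{k-1})$ it ``can only affect the implied constant in the $\Omega$-estimate, not destroy it''; but $\Omega(x^{k-1})+O(x^{k-1})$ need not be $\Omega(x^{k-1})$. Concretely, the $(j,i)=(1,0)$ piece equals $\tfrac12\,x^{k-1}\sum_{d^r\le x}\mu(d)d^{-r(k-1)}\sim x^{k-1}/(2\zeta(r(k-1)))$, which is genuinely of size $x^{k-1}$, not $o(x^{k-1})$, and could in principle cancel the cross term. The fix is to use the \emph{sign} information established inside the proof of Lemma~\ref{thm:2.2}: there one shows $\sum_{d^r\le x}\mu(d)d^{-r(k-1)}\{x/d^r\}\le M<0$ on an infinite sequence of $x$, so on that sequence the cross term $-x^{k-1}\sum\mu(d)d^{-r(k-1)}\{x/d^r\}$ is positive, as is the $(1,0)$ term $x^{k-1}/(2\zeta(r(k-1)))$; they reinforce rather than cancel. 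Since every other piece has $j+i\ge 2$ and is therefore $O(x^{k-2}\log x)=o(x^{k-1})$ (and the error from Proposition~\ref{prop} is $O(x^{1/r})=o(x^{k-1})$ because $r(k-1)\ge 2$ under the hypotheses), the $\Omega(x^{k-1})$ conclusion follows. Your anticipated obstacle about stray logarithms at the $x^{k-1}$ level is a red herring: the only inner sums attached to $x^{k-1}$ carry exponent $r(k-1)\ge 2$ and are bounded.
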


\begin{proof}
As already remarked, we know that 
\begin{align*}
\sum_{n \le x}J_{k-1}^r(n)&=\frac 1k \sum_{j=0}^{k-1} \binom kj B_j\sum_{d^r \le x}\mu(d)\left(\frac x{d^r}-\left\{\frac x{d^r}\right\}\right)^{k-j}.\\
\intertext{Using the binomial theorem and the order of summation,}
\sum_{n \le x}J_{k-1}^r(n)&=\frac 1k \sum_{j=0}^{k-1} \binom kj B_j\sum_{i=0}^{k-j}(-1)^i\sum_{d^r \le x}\mu(d)\left(\frac x{d^r}\right)^{k-j-i}\left\{\frac x{d^r}\right\}^i
\end{align*}
\begin{align*}
\intertext{Combining the remark before of this Theorem with Lemma \ref{thm:2.2} and Proposition \ref{prop}, we get}
\sum_{n \le x}J_{k-1}^r(n)&=x^k\sum_{d^r \le x}\frac {\mu(d)}{d^{rk}}+\Omega(x^{k-1})\\
&=\frac{x^k}{k\zeta(rk)}+\Omega(x^{k-1}).
\intertext{This proved the lemma.}
\end{align*}
\end{proof}

\section{Generating function of $V_k^r(x)$}
In this section, we consider a generating function of $V_k^r(x)$. The case of $r=1$ was considered in \cite{Ta16}. We will prove a generalisation of the case of $r=1$ by following the method of Theorem 3.1. of \cite{Ta16}.\begin{thm}
\label{thm:main}
Generating function of $V_k^r(x)$ is the following.
\begin{align*}
\sum_{k=0}^{\infty}\frac{u^k}{k!}V_k^r(x)&=\frac1{2u}(e^{(2X+1)u}-e^{(2X-1)u})\\
\intertext{and}
\sum_{k=0}^{\infty}u^{k+1}V_k^r(x)&=\frac12\log \frac{1-(2X-1)u}{1-(2X+1)u}, \\
\intertext{where $\displaystyle{k\sum_{n \le x}J_{k-1}^r(n)}$ is replaced by $X^k$ when $k\ge1$, and $X^0$ are assigned to be the value $0$.}
\end{align*}
\end{thm}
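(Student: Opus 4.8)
The plan is to reduce both asserted generating functions to a single polynomial identity and then to recover the exponential and ordinary versions as two packagings of the same family of polynomials. First I would produce an explicit closed form for $V_k^r(x)$ by Möbius inversion. Every $k$-tuple in $[-x,x]^k$ with at least one nonzero entry factors uniquely as $d^r\cdot(y_1,\dots,y_k)$, where $d=(x_1,\dots,x_k)_r$ and $(y_1,\dots,y_k)_r=1$; grouping by the value of $d$ gives $(2\lfloor x\rfloor+1)^k-1=\sum_{d\ge1}V_k^r(\lfloor x/d^r\rfloor)$. Applying Möbius inversion in the form appropriate to this relation (using $\sum_{d\mid m}\mu(d)=[m=1]$) then yields
\[
V_k^r(x)=\sum_{d^r\le x}\mu(d)\Bigl[(2\lfloor x/d^r\rfloor+1)^k-1\Bigr],
\]
where the terms with $d^r>x$ vanish, so the sum is finite. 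Note that this already forces $V_0^r(x)=0$.

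Next I would observe that the two claimed identities are merely the exponential and ordinary generating functions of one polynomial family. Setting $P_k(X):=\frac{1}{2(k+1)}\bigl[(2X+1)^{k+1}-(2X-1)^{k+1}\bigr]$, a routine expansion of $e^{(2X\pm1)u}$ and of $\log(1-(2X\pm1)u)$ shows, as formal identities in the indeterminates $u,X$, that $\sum_{k\ge0}\frac{u^k}{k!}P_k(X)=\frac1{2u}(e^{(2X+1)u}-e^{(2X-1)u})$ and $\sum_{k\ge0}u^{k+1}P_k(X)=\frac12\log\frac{1-(2X-1)u}{1-(2X+1)u}$. Since the substitution $X^i\mapsto iS_i$ (with $X^0\mapsto0$), where $S_i:=\sum_{n\le x}J_{i-1}^r(n)$, acts linearly on each coefficient of $u$, it therefore suffices to prove the single polynomial identity $V_k^r(x)=P_k(X)\big|_{X^i\mapsto iS_i}$; both generating functions then follow at once by substituting into the two displays above.

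To establish that identity I would expand the closed form by the binomial theorem, obtaining $V_k^r(x)=\sum_{i=1}^{k}\binom{k}{i}2^i T_i$ with $T_i:=\sum_{d^r\le x}\mu(d)\lfloor x/d^r\rfloor^i$, while the expression for $\sum_{n\le x}J_{k-1}^r(n)$ established earlier, together with $\lfloor x/d^r\rfloor=\frac{x}{d^r}-\{x/d^r\}$, gives $kS_k=\sum_{i=1}^{k}\binom{k}{i}B_{k-i}T_i$. In terms of exponential generating functions this reads $\mathcal S(u)=\frac{ue^u}{e^u-1}\,\mathcal T(u)$, where $\mathcal T(u)=\sum_{i\ge1}\frac{T_i}{i!}u^i$ and $\mathcal S(u)=\sum_{k\ge1}\frac{kS_k}{k!}u^k$, the Bernoulli series $\frac{ue^u}{e^u-1}$ arising precisely from the convention $B_1=\tfrac12$. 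From $V_k^r(x)=\sum_i\binom ki 2^iT_i$ one computes $\sum_{k\ge0}\frac{u^k}{k!}V_k^r(x)=e^u\mathcal T(2u)$, and eliminating $\mathcal T$ turns this into $\frac{e^u-e^{-u}}{2u}\mathcal S(2u)$, which is exactly the right-hand side of the first identity once $\mathcal S(2u)$ is read as $e^{2uX}$ under the substitution. Comparing coefficients of $u$ recovers $V_k^r(x)=P_k(X)\big|_{X^i\mapsto iS_i}$, whence both identities follow.

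The main obstacle is the algebraic bookkeeping in passing between the box count and the totient sums: correctly inverting $kS_k=\sum_i\binom ki B_{k-i}T_i$ through the Bernoulli generating function and verifying that the exponential manipulation collapses to $\frac{\sinh u}{u}$. I would also have to be careful at the boundary, namely the cutoff $d^r\le x$ in the Möbius sum, the passage from Faulhaber-type sums of $\lfloor x/d^r\rfloor$ to the $J_{i-1}^r$ sums via $\{x/d^r\}$, and in particular the convention $X^0\mapsto0$, which is forced because $P_0(X)=X^0$ must map to $V_0^r(x)=0$.
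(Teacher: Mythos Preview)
Your argument is correct. Both you and the paper reduce the theorem to the single polynomial identity
\[
V_k^r(x)\;=\;\frac{1}{2(k+1)}\bigl[(2X+1)^{k+1}-(2X-1)^{k+1}\bigr]\Big|_{X^i\mapsto iS_i,\ X^0\mapsto 0},
\]
and both then read off the exponential and ordinary generating functions by elementary series expansions. The difference lies in how this identity is reached. The paper invokes the combinatorial computation of \cite{Ta16}, which produces directly
\[
V_k^r(x)=\sum_{i=0}^{k-1}\binom{k}{i}2^{k-i}\sum_{n\le x}\sum_{j=0}^{k-i-1}(-1)^{k-i-1-j}\binom{k-i}{j}J_j^r(n),
\]
and then collapses this by the binomial theorem and a change of summation. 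You instead obtain the closed form $V_k^r(x)=\sum_{d}\mu(d)\bigl[(2\lfloor x/d^r\rfloor+1)^k-1\bigr]$ by M\"obius inversion on the total box count, expand to get $V_k^r(x)=\sum_{i\ge1}\binom{k}{i}2^iT_i$, and then pass through the Bernoulli generating function $\tfrac{ue^u}{e^u-1}$ to convert the $T_i$ into the $kS_k$. This is a genuinely different and more self-contained derivation: it avoids the combinatorial bookkeeping of \cite{Ta16} and makes transparent why the Bernoulli numbers (and hence Faulhaber's formula) appear, at the cost of a short exponential-generating-function computation. The two routes are equivalent once one observes that the inner alternating sum in the paper's formula telescopes to $T_{k-i}$.
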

\begin{proof}
We can show these results as well as Theorem 3.1. of \cite{Ta16}. 
It suffices to show that \[V_k^r(x)=\frac1{2(k+1)}\{(2X+1)^{k+1}-(2X-1)^{k+1}\}.\]
After change of functions $J_k(n)$ into $J_k^r(n)$ in proof of Theorem 3.1. of \cite{Ta16}, we can compute $V_k^r(x)$ in same combinatorial way.
\begin{align*}
\intertext{Hence we obtain following equation}
V_k^r(x)&=\sum_{i=0}^{k-1}\binom ki 2^{k-i}\left(\sum_{n \le x}\sum_{j=0}^{k-i-1}(-1)^{k-i-1-j}\binom {k-i}jJ_j(n)\right).\\
\intertext{Applying the binomial theorem and changing the order of summation of it, we show}
V_k^r(x)&=\frac1{2(k+1)}\{(2X+1)^{k+1}-(2X-1)^{k+1}\}.
\intertext{This proves the theorem.}
\end{align*}
\end{proof}

\section{The exact order of magnitude of $E_k^r(x)$}
We showed that $V_k^r(x)$ is finite linear combination of $\displaystyle{\sum_{n \le x}J_{k-1}^r(n)}$ in last section. Combining this result with Theorem \ref{thm:2.3}, we get the exact order of magnitude of $E_k^r(x)$ as follows.

\begin{thm}
If $rk\ge3$ and $k\not=1$, \[E_k^r(x)=\Omega(x^{k-1}).\]
\end{thm}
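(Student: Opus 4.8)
The plan is to combine the exact symbolic expression for $V_k^r(x)$ supplied by Theorem \ref{thm:main} with the refined estimate for the partial sums of $J_{k-1}^r(n)$ in Theorem \ref{thm:2.3}. Recall that Theorem \ref{thm:main} gives
\[V_k^r(x)=\frac{1}{2(k+1)}\left\{(2X+1)^{k+1}-(2X-1)^{k+1}\right\},\]
where the symbol $X^j$ denotes $j\sum_{n\le x}J_{j-1}^r(n)$ for $j\ge1$ and $X^0=0$. First I would expand the difference by the binomial theorem. The contributions of $(2X+1)^{k+1}$ and $(2X-1)^{k+1}$ cancel or reinforce according to the parity of $k+1-j$, so that only indices with $j\equiv k\pmod2$ survive; in particular the top term $j=k+1$ cancels and the leading surviving term is $j=k$. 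This yields
\[V_k^r(x)=2^kX^k+\sum_{\substack{0\le j\le k-2\\ j\equiv k\,(2)}}\frac{1}{k+1}\binom{k+1}{j}2^jX^j.\]
The crucial structural point is that the parity condition forces a gap: there is no $j=k-1$ term, so every remaining summand after the leading one has $j\le k-2$.

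Next I would evaluate the leading term. Since $X^k=k\sum_{n\le x}J_{k-1}^r(n)$, Theorem \ref{thm:2.3} applies under exactly the present hypotheses $rk\ge3$ and $k\neq1$, giving
\[2^kX^k=2^kk\left(\frac{x^k}{k\zeta(rk)}+\Omega(x^{k-1})\right)=\frac{(2x)^k}{\zeta(rk)}+\Omega(x^{k-1}),\]
where the positive constant $2^kk$ is absorbed into the $\Omega$. Thus the leading term already produces precisely the main term $(2x)^k/\zeta(rk)$ of $V_k^r(x)$ together with an $\Omega(x^{k-1})$ fluctuation.

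It then remains to control the tail sum over $j\le k-2$. For this I would invoke the trivial bound $J_{j-1}^r(n)\le n^{j-1}$, immediate from the counting definition of the general Jordan totient function, so that $X^j=j\sum_{n\le x}J_{j-1}^r(n)=O(x^j)$. Since every tail index satisfies $j\le k-2$, each such symbolic term is $O(x^{k-2})$ and hence the whole tail is $O(x^{k-2})$. Subtracting the main term then gives
\[E_k^r(x)=V_k^r(x)-\frac{(2x)^k}{\zeta(rk)}=\Omega(x^{k-1})+O(x^{k-2}),\]
and adding a perturbation of strictly smaller order $O(x^{k-2})=o(x^{k-1})$ leaves the $\limsup$ of $|E_k^r(x)|/x^{k-1}$ positive, so $E_k^r(x)=\Omega(x^{k-1})$.

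I expect no serious obstacle, as the argument is largely bookkeeping once the two cited theorems are in hand. The single point that genuinely deserves care is the parity gap: one must confirm that the second-highest surviving power is $X^{k-2}$ rather than $X^{k-1}$, since it is exactly this gap that cleanly separates the true $\Omega(x^{k-1})$ oscillation inherited from Theorem \ref{thm:2.3} from the negligible $O(x^{k-2})$ remainder. A secondary, routine check is that combining an $\Omega$-estimate with an $O$-estimate of strictly smaller order preserves the $\Omega$-conclusion, which follows directly from the $\limsup$ definition of $\Omega$ recorded in Section 2.
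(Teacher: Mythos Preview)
Your proof is correct and follows essentially the same route as the paper: expand the symbolic identity from Theorem \ref{thm:main} to isolate the leading term $2^kX^k$, apply Theorem \ref{thm:2.3} to that term, and absorb the remaining symbolic powers $X^j$ with $j\le k-2$ into an $O(x^{k-2})$ tail. Your write-up is simply more explicit than the paper's---you spell out the parity gap, the trivial bound $J_{j-1}^r(n)\le n^{j-1}$ for the tail, and the $\Omega+o=\Omega$ step---where the paper compresses all of this into the single line $V_k^r(x)=(2X)^k+O(X^{k-2})$.
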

\begin{proof}
We can prove this theorem easily from Theorem \ref{thm:main}.
\begin{align*}
\intertext{From Theorem \ref{thm:main}}
V_k^r(x)&=\frac1{2(k+1)}\{(2X+1)^{k+1}-(2X-1)^{k+1}\}\\
&=(2X)^{k}+O(X^{k-2}).\\
\intertext{Applying Theorem \ref{thm:2.3}, we find}
V_k^r(x)&=\frac{2^k}{\zeta(rk)}x^k+\Omega(x^{k-1}).\\
\intertext{Hence $E_k^r(x)=\Omega(x^{k-1})$ for $rk\ge3$ and $k\not=1$}
\end{align*}
\end{proof}
Combine Benkoski's result with this theorem, we prove that the exact order of magnitude of $E_k^r(x)$ is $x^{k-1}$, for all $rk\ge3$ and $k\not=1$.

\end{document}